\numberwithin{equation}{section} \theoremstyle{section}
\newtheorem{Remark}[equation]{Remark}
\theoremstyle{plain}
\newtheorem{De}[equation]{Definition}
\newtheorem{Theorem}[equation]{Theorem}
\newtheorem{Lemma}[equation]{Lemma}
\newtheorem{Cor}[equation]{Corollary}
\title{THE ROKHLIN PROPERTY FOR AUTOMORPHISMS
ON SIMPLE $C$*-ALGEBRAS}
\author{JIAJIE HUA}
\date{}
\begin{document}

\maketitle \markboth{JIAJIE HUA}{THE ROKHLIN PROPERTY FOR
AUTOMORPHISMS ON SIMPLE $C$*-ALGEBRAS}
\renewcommand{\thefootnote}{{}}
\footnote{\hspace{-14pt} {\it 2000 Mathematics Subject Classification}. Primary 46L55: Secondary 46L35, 46L40.\\
{\it Key words and phrases}. crossed products; tracial rank zero;
tracial Rokhlin property.\\ The author was supported the National Natural Science
Foundation of China (Nos. 10771069, 10671068,10771161).}

\begin{abstract}  Let $\mathcal{A}$ be the class of unital separable simple amenable $C$*-algebras $A$ which
satisfy the Universal Coefficient Theorem for which $A\otimes
M_{\texttt{P}}$ has tracial rank zero for some supernatural number
$\texttt{p}$ of infinite type. Let $A\in \mathcal{A}$ and let
$\alpha$ be an automorphism of $A.$ Suppose that $\alpha$ has the tracial
Rokhlin property. Suppose also that there is an integer $J\geq 1$
such that $[\alpha^J]=[\mbox{id}_A]$ in $KL(A,A)$, we show that
$A\rtimes_{\alpha}\mathbb{Z}\in \mathcal{A}.$
\end{abstract}

\section{Introduction}

Alan Connes introduced the Rokhlin property in ergodic theory to
operator algebras (\cite{A.Connes1}). Several versions of the
Rokhlin property for automorphism on $C$*-algebra have been studied
(for example \cite{A.Kishimoto4},\cite{M.R},\cite{N. C.
Phillips3},\cite{R.Herman}). Given a unital $C$*-algebra $A$ and an
automorphism $\alpha$ of $A$, one may view the pair $(A,\alpha)$ as
a non-commutative dynamical system. To study its dynamical
structure, it is natural to introduce the notion of Rokhlin
property. Let $A$ be a unital simple A$\mathbb{T}$-algebra (direct limits of circle algebras) with real rank zero and let $\alpha$ be an automorphism of $A$. In several
cases, Kishimoto showed that if $\alpha$ is approximately inner and
has a Rokhlin property, then the crossed product
$A\rtimes_{\alpha}\mathbb{Z}$ is again a unital simple A$\mathbb{T}$
with real rank zero (\cite{A.Kishimoto3},\cite{A.Kishimoto4}).

In \cite{H.Lin3}, H. Lin proved the following result:  Let $A$ be a unital separable
simple $C$*-algebra with tracial rank zero and let $\alpha$ be an
automorphism of $A$. Suppose that $\alpha$ has a tracial Rokhlin property. Suppose also that there is an integer $J\geq1$
such that $[\alpha^J]=[\mbox{id}_A]$ in $KL(A,A)$. Then
$A\rtimes_{\alpha}\mathbb{Z}$ has tracial rank zero. Thus H. Lin generalizes the Kishimoto'results on unital simple A$\mathbb{T}$-algebras with real rank zero to $C$*-algebras with tracial rank zero.

In this paper, we generalize Lin's result as the following: Let $\mathcal{A}$ be the
class of unital separable simple amenable $C$*-algebras $A$ which
satisfy the Universal Coefficient Theorem for which $A\otimes
M_{\texttt{P}}$ has tracial rank zero for some supernatural number
$\texttt{p}$ of infinite type. Let $A\in \mathcal{A}$ and let
$\alpha$ be an automorphism of $A.$ Suppose that $\alpha$ has the tracial
Rokhlin property. Suppose also that there is an integer $J\geq 1$
such that $[\alpha^J]=[\mbox{id}_A]$ in $KL(A,A)$. Then
$A\rtimes_{\alpha}\mathbb{Z}\in \mathcal{A}.$

This paper is organized as follows. In Section 2 we introduce
notation on $C$*-algebras. In Section 3, we prove that,
under our hypotheses, $A\rtimes_{\alpha}\mathbb{Z}\in \mathcal{A}$.

\section{Notation}

We will use the following convention:

(1) Let $A$ be a $C$*-algebra, let $a\in A$ be a positive element
and let $p\in A$ be a projection. We write $[p]\leq [a]$ if there is
a projection $q\in \overline{aAa}$ and a partial isometry $v\in A$
such that $v^{*}v=p$ and $vv^{*}=q.$

(2) Let $A$ be a $C$*-algebra. We denote by Aut$(A)$ the
automorphism group of $A$. If $A$ is unital and $u\in A$ is a
unitary , we denote by ad$u$ the inner automorphism defined by
ad$u(a)=u^{*}au$ for all $a\in A.$

(3) Let $x\in A$, $\varepsilon>0$ and $\mathcal{F}\subset A.$ We
write $x\in_{\varepsilon}\mathcal{F},$ if
dist$(x,\mathcal{F})<\varepsilon$, or there is $y\in \mathcal{F}$
such that $\| x-y \|< \varepsilon .$

(4) Let $A$ be a unital $C$*-algebra and $T(A)$ the compact convex
set of tracial states of $A$.

(5) A unital $C$*-algebra is said to have real rank zero, written
RR$(A)=0$, if the set of invertible self-adjoint elements is dense
in self-adjoint elements of $A.$

(6) we say the order on projection over a unital $C$*-algebra $A$ is
determined by traces, if for any two projections $p,q\in A,
\tau(p)<\tau(q)$ for all $\tau\in T(A)$ implies that $p$ is
equivalent to a projection $p'\leq q.$

\begin{De}\cite{H.Lin1} Let $A$ be a unital simple $C$*-algebra.
Then $A$ is said to have tracial (topological) rank zero if for every $\varepsilon>0$, every
finite set $\mathcal{F}\subset A,$  and every
nonzero positive element $a\in A,$ there exists a finite dimensional
$C$*-subalgebra $B\subset A$ with id$_{B}=p$ such
that:\\
(1) $\| px-xp \|< \varepsilon$ for all $x\in \mathcal{F}$.\\
(2) $pap\in_{\varepsilon} B$ for all $x\in \mathcal{F}$.\\
(3) $[1-p]\leq [a].$
\end{De}

If $A$ has tracial rank zero, we write $\mathrm{TR}(A)=0.$

\begin{De} (The Jiang-Su algebra \cite{X. Jiang}) Denote by
$\mathcal{Z}$ the Jiang-Su algebra of unital infinite dimensional
simple $C$*-algebra which is an inductive limit of prime dimension
drop algebras with a unique tracial state, $(K_0(\mathcal{Z}),
K_0(\mathcal{Z})_+,[1_{\mathcal{Z}}])=(\mathbb{Z},\mathbb{N},1)$ and
$K_1(\mathcal{Z})=0.$
\end{De}

\begin{De} (A classifiable class of unital separable simple amenable
$C$*-algebra) Denote by $\mathcal{N}$ the class of all unital
separable amenable $C$*-algebras which satisfy the Universal
Coefficient Theorem.

For a supernatural number $\texttt{p},$ denote by $M_{\texttt{p}}$
the UHF algebra associated with $\texttt{p}$(see \cite{J.Dixmier}).

Let $\mathcal{A}$ denote the class of all unital separable simple
amenable $C$*-algebras $A$ in $\mathcal{N}$ for which $A\otimes
M_{\texttt{P}}$ has tracial rank zero for some supernatural number
$\texttt{p}$ of infinite type.
\end{De}

\begin{De} Let $A$ be a unital stably finite separable simple
amenable $C$*-algebra.

By Ell($A$) we mean the following:
$$(K_0(A),K_0(A)_+,[1_A],K_1(A),T(A),r_{A}),$$
where $r_{A}:T(A)\rightarrow S_{[1_A]}(K_0(A))$ is a surjective
continuous affine map such that $r_A(\tau)([p])=\tau(p)$ for all
projections $p\in A\otimes M_k$ for $k=1,2,\dots$.

Suppose that $B$ is another stably finite unital separable simple
$C$*-algebra. A map $\Lambda: \mbox{ Ell}(A)\rightarrow \mbox{
Ell}(B)$ is said to be a homomorphism if $\Lambda$ gives an order
homomorphism $\lambda_0:K_0(A)\rightarrow K_0(B)$ such that
$\lambda_0:K_0(A)\rightarrow K_0(B)$ such that
$\lambda_0([1_A])=[1_B],$ a homomorphism
$\lambda_1:K_1(A)\rightarrow K_1(B),$ a continuous affine map
$\lambda'_{\rho}:T(B)\rightarrow T(A)$ such that

$$\lambda'_{\rho}(\tau)(p)=r_B(\tau)(\lambda_0([p]))$$

for all projection in $A\otimes M_k$ for $k=1,2,\dots$ and  all
$\tau\in T(B).$

 We say that such $\Lambda$ is an isomorphism, if $\lambda_0$ and
 $\lambda_1$  are isomorphisms and $\lambda'_{\rho}$ is an affine
 homeomorphism. In this case, there is an affine homeomorphism
 $\lambda_{\rho}: T(A)\rightarrow T(B)$ such that $\lambda^{-1}_{\rho}=\lambda'_{\rho}.$
\end{De}

\begin{Theorem}(Corollary 11.9 of \cite{H.Lin8}) Let $A,B\in
\mathcal{A}.$ Then $$A\otimes \mathcal{Z}\cong B\otimes
\mathcal{Z}$$ if
$$Ell(A\otimes \mathcal{Z})\cong Ell(B\otimes
\mathcal{Z}).$$
\end{Theorem}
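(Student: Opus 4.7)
The plan is to reduce the isomorphism of $A\otimes\mathcal{Z}$ and $B\otimes\mathcal{Z}$ to Lin's classification theorem for unital separable simple amenable $C^*$-algebras of tracial rank zero satisfying the UCT, combined with the inductive-limit realization of $\mathcal{Z}$ due to R{\o}rdam--Winter. Since $A,B\in\mathcal{A}$, there is a supernatural number $\texttt{p}$ of infinite type with both $A\otimes M_\texttt{p}$ and $B\otimes M_\texttt{p}$ of tracial rank zero. Choose a second supernatural number $\texttt{q}$ of infinite type, relatively prime to $\texttt{p}$, so that $M_\texttt{p}\otimes M_\texttt{q}\cong \mathcal{Q}$ is the universal UHF algebra; tensoring by $M_\texttt{q}$ preserves tracial rank zero, so $A\otimes \mathcal{Q}$ and $B\otimes \mathcal{Q}$ are in $\mathrm{TR}=0$ as well.

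First I would exploit $\mathcal{Z}$-absorption of UHF algebras: the given isomorphism of $\mathrm{Ell}(A\otimes\mathcal{Z})$ and $\mathrm{Ell}(B\otimes\mathcal{Z})$, after tensoring with $M_\texttt{p}$ (respectively $M_\texttt{q}$), yields isomorphisms $\mathrm{Ell}(A\otimes M_\texttt{p})\cong \mathrm{Ell}(B\otimes M_\texttt{p})$ and $\mathrm{Ell}(A\otimes M_\texttt{q})\cong \mathrm{Ell}(B\otimes M_\texttt{q})$. By Lin's classification theorem for $\mathrm{TR}=0$ algebras in the bootstrap class, these lift to isomorphisms $\varphi_\texttt{p}:A\otimes M_\texttt{p}\to B\otimes M_\texttt{p}$ and $\varphi_\texttt{q}:A\otimes M_\texttt{q}\to B\otimes M_\texttt{q}$.

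Next I would realize $A\otimes\mathcal{Z}$ via the R{\o}rdam--Winter pullback/mapping-torus construction: $A\otimes\mathcal{Z}$ arises as a stationary inductive limit built from $A\otimes M_\texttt{p}$ and $A\otimes M_\texttt{q}$ glued together along unital embeddings into $A\otimes\mathcal{Q}$, and likewise for $B$. To piece $\varphi_\texttt{p}$ and $\varphi_\texttt{q}$ into an isomorphism at the $\mathcal{Z}$ level, their further extensions $\varphi_\texttt{p}\otimes\mathrm{id}_{M_\texttt{q}}$ and $\varphi_\texttt{q}\otimes\mathrm{id}_{M_\texttt{p}}$ must agree, up to approximate unitary equivalence, with a fixed isomorphism $A\otimes\mathcal{Q}\to B\otimes\mathcal{Q}$ induced by the Elliott-invariant identification. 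This alignment is achieved by appealing to the uniqueness theorem for homomorphisms between $\mathrm{TR}=0$ algebras, which determines such maps up to approximate unitary equivalence by their $KL$-class and tracial data.

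Finally, an Elliott two-sided approximate intertwining along the inductive-limit decomposition of $\mathcal{Z}$ produces the desired isomorphism $A\otimes\mathcal{Z}\cong B\otimes\mathcal{Z}$. I expect the main obstacle to lie precisely in the alignment step: coordinating the two local isomorphisms $\varphi_\texttt{p}$, $\varphi_\texttt{q}$ so that they induce the \emph{same} map on $A\otimes\mathcal{Q}$ up to a unitary implementing the intertwining requires careful bookkeeping of both $KL$-data and tracial data, together with enough approximate innerness to absorb the discrepancy at each finite stage of the limit.
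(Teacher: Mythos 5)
The paper does not actually prove this statement: it is quoted as Corollary 11.9 of Lin's preprint \cite{H.Lin8} and used as a black box, so there is no internal proof to compare yours against. Judged on its own terms, your outline correctly identifies the architecture of the argument in the cited literature (Winter's localization of the Elliott programme at UHF algebras, refined by Lin): classify $A\otimes M_{\texttt{p}}$ and $A\otimes M_{\texttt{q}}$ via the tracial-rank-zero classification theorem, realize $\mathcal{Z}$ as a stationary inductive limit built from generalized dimension drop algebras, and intertwine.

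However, there is a genuine gap exactly where you suspect one, and it is not mere bookkeeping. Because the gluing of $A\otimes M_{\texttt{p}}$ and $A\otimes M_{\texttt{q}}$ inside $A\otimes\mathcal{Z}$ happens along a continuous parameter (the interval coordinate of the dimension drop algebra $Z_{\texttt{p},\texttt{q}}$), aligning $\varphi_{\texttt{p}}\otimes\mathrm{id}$ and $\varphi_{\texttt{q}}\otimes\mathrm{id}$ up to \emph{approximate} unitary equivalence is not sufficient: the uniqueness theorem you invoke only yields a sequence of unitaries, whereas the intertwining along the interval requires a continuous path of unitaries, i.e.\ \emph{asymptotic} unitary equivalence. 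Controlling the obstruction to upgrading the former to the latter (a rotation-map invariant living in a quotient involving $\mathbb{R}/\mathbb{Z}$-type data, beyond the $KL$-class and traces) is the principal technical content of \cite{H.Lin8} and the reason for its title; your sketch treats this as absorbable ``discrepancy,'' which is precisely what fails without the extra invariant. A secondary unaddressed point: membership in $\mathcal{A}$ only guarantees $TR(A\otimes M_{\texttt{p}})=0$ for \emph{some} supernatural number $\texttt{p}$, and $A$ and $B$ may a priori come with different ones; you need to know that this property propagates to the coprime partner $M_{\texttt{q}}$ (or arrange everything compatibly through the universal UHF algebra), which your sketch assumes without comment.
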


\begin{De} Recall that a $C$*-algebra $A$ is said to be
$\mathcal{Z}$-stable if $A\otimes \mathcal{Z}\cong A.$ Denote by
$\mathcal{A}_{\mathcal{Z}}$ the class of $\mathcal{Z}$-stable
$C$*-algebras in $\mathcal{A}.$
\end{De}

\begin{Cor} Let $A$ and $B$ be two unital separable amenable simple
$C$*-algebras in $\mathcal{A}_{\mathcal{Z}}.$ Then $A\cong B$ if and
only if $$Ell(A)\cong Ell(B).$$
\end{Cor}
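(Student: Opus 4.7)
The plan is to deduce this corollary directly from Theorem 2.5 together with the definition of $\mathcal{A}_{\mathcal{Z}}$, so essentially no new work is required beyond bookkeeping with the Elliott invariant. First, I would dispose of the easy direction: if $A \cong B$ as $C^*$-algebras, then any $*$-isomorphism induces a canonical isomorphism of the six-tuple $\mathrm{Ell}(A) = (K_0(A),K_0(A)_+,[1_A],K_1(A),T(A),r_A)$, so $\mathrm{Ell}(A) \cong \mathrm{Ell}(B)$.

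For the nontrivial direction, suppose $\mathrm{Ell}(A) \cong \mathrm{Ell}(B)$. Because $A,B \in \mathcal{A}_{\mathcal{Z}}$, we have $A \otimes \mathcal{Z} \cong A$ and $B \otimes \mathcal{Z} \cong B$, so the Elliott invariants are preserved under tensoring with $\mathcal{Z}$:
\[
\mathrm{Ell}(A \otimes \mathcal{Z}) \cong \mathrm{Ell}(A) \cong \mathrm{Ell}(B) \cong \mathrm{Ell}(B \otimes \mathcal{Z}).
\]
Since $\mathcal{A}_{\mathcal{Z}} \subset \mathcal{A}$, both $A$ and $B$ lie in $\mathcal{A}$, so Theorem 2.5 applies and yields $A \otimes \mathcal{Z} \cong B \otimes \mathcal{Z}$. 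Using $\mathcal{Z}$-stability once more, we conclude $A \cong A \otimes \mathcal{Z} \cong B \otimes \mathcal{Z} \cong B$.

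There is no real obstacle here; the only minor point that deserves a sentence of justification is the observation that any isomorphism of Elliott invariants $\mathrm{Ell}(A) \to \mathrm{Ell}(B)$ produces, by tensoring the relevant data with the invariant of $\mathcal{Z}$ (which is $(\mathbb{Z},\mathbb{N},1,0)$ with a single tracial state), an isomorphism $\mathrm{Ell}(A \otimes \mathcal{Z}) \cong \mathrm{Ell}(B \otimes \mathcal{Z})$. Given the $\mathcal{Z}$-stability assumption this is immediate, and everything else is a direct citation of Theorem 2.5.
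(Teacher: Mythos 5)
Your proof is correct and is exactly the argument the paper intends (the paper in fact states this corollary without writing out a proof): the easy direction is functoriality of $\mathrm{Ell}$, and the converse follows by using $A\otimes\mathcal{Z}\cong A$ and $B\otimes\mathcal{Z}\cong B$ to transfer the hypothesis to Theorem 2.5 and then transfer the conclusion back. Your closing remark is also right that no K\"unneth-type computation is needed, since $\mathcal{Z}$-stability makes $\mathrm{Ell}(A\otimes\mathcal{Z})\cong\mathrm{Ell}(A)$ immediate.
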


 \section{Main result}

 We recall the definition of the tracial Rokhlin property.

\begin{De} (\cite{Osaka}) Let $A$ be a unital $C$*-algebra and let
$\alpha\in$Aut$(A)$. We say $\alpha$ has the tracial Rokhlin
property if for every $\varepsilon>0$, every $n\in \mathbb{N}$,
every nonzero positive element $a\in A$, every finite set
$\mathcal{F}\subset A,$ there are the mutually orthogonal
projections $e_{0},e_{1},\cdots,e_{n}\in A$ such that:

 (1) $\|\alpha(e_{j})-e_{j+1}\|<\varepsilon$ for $0\leq j \leq
n-1.$

(2) $\|e_{j}b-be_{j}\|<\varepsilon$ for $0\leq j \leq n$ and all
$b\in \mathcal{F}.$

(3) With $e=\sum_{j=0}^{n}e_{j},[1-e]\leq[a].$
\end{De}

The following result is Lemma 1.4 of \cite{Osaka}.

\begin{Lemma} Let $A$ be a stably finite simple unital C*-algebra such that RR(A) = 0
and the order on projections over $A$ is determined by traces. Let $\alpha\in\mbox{Aut}(A).$ Then
$\alpha$ has the tracial Rokhlin property if and only if for every finite set $\mathcal{F} \subset A$, every
$\varepsilon > 0$, and every $n \in \mathbb{N},$ there are mutually orthogonal projections $e_0, e_1, . . . , e_n \in A$
such that:

(1) $\|\alpha(e_{j})-e_{j+1}\|<\varepsilon$ for $0\leq j \leq
n-1.$

(2) $\|e_{j}b-be_{j}\|<\varepsilon$ for $0\leq j \leq n$ and all
$b\in \mathcal{F}.$

(3) With $e=\sum_{j=0}^{n}e_{j}$, we have $\tau(1-e) < \varepsilon$ for all $\tau\in T(A).$
\end{Lemma}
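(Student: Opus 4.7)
The strategy is that conditions (1) and (2) are verbatim identical in the two formulations, so the task reduces to converting between the comparison $[1-e]\leq[a]$ in Definition~3.1(3) and the tracial inequality $\tau(1-e)<\varepsilon$ in (3) of the present lemma. Both directions will pass through a carefully chosen nonzero projection $p$: RR$(A)=0$ is used to produce such projections, while the order-by-traces hypothesis is used to upgrade tracial smallness to Murray--von Neumann subequivalence.

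For the forward implication, assume Definition~3.1 and fix $\mathcal{F},\varepsilon,n$. The first step is to produce a nonzero projection $p\in A$ with $\tau(p)<\varepsilon$ uniformly in $\tau\in T(A)$. This is a standard halving argument in a simple unital C*-algebra with RR$=0$: starting from $1_A$ and repeatedly splitting a projection into two Murray--von Neumann equivalent pieces inside a hereditary subalgebra, one obtains after finitely many steps a projection of trace $2^{-k}<\varepsilon$. Now apply Definition~3.1 with $a:=p$ to obtain $e_0,\dots,e_n$ satisfying (1) and (2) together with $[1-e]\leq[p]$. Writing $1-e\sim q\leq p$, evaluation at any $\tau$ gives $\tau(1-e)=\tau(q)\leq\tau(p)<\varepsilon$, which is exactly (3).

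For the reverse implication, assume the tracial formulation and fix $\mathcal{F},\varepsilon,n$ together with a nonzero positive element $a\in A$. Since $\overline{aAa}$ is a nonzero hereditary subalgebra of a simple RR$=0$ C*-algebra, it inherits real rank zero, and so contains a nonzero projection $p$. By simplicity, $\tau(p)>0$ for each $\tau\in T(A)$, and continuity of $\tau\mapsto\tau(p)$ together with compactness of $T(A)$ gives $\delta:=\inf_{\tau\in T(A)}\tau(p)>0$. Apply the tracial version with $\min(\varepsilon,\delta)$ in place of $\varepsilon$: this delivers projections $e_0,\dots,e_n$ satisfying (1), (2) and $\tau(1-e)<\delta\leq\tau(p)$ for every $\tau$. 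The order-by-traces hypothesis then yields $[1-e]\leq[p]$, and since $p\in\overline{aAa}$ the definition in item~(1) of Section~2 gives $[p]\leq[a]$ immediately (take the witnessing partial isometry to be $p$ itself). Composing these yields $[1-e]\leq[a]$, which is Definition~3.1(3).

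The argument is essentially a dictionary between the two smallness conditions on $1-e$. The only non-cosmetic ingredients are the standard halving lemma for producing a small-trace projection in the forward direction and the existence of a nonzero projection inside $\overline{aAa}$ in the reverse direction; neither is a serious obstacle, so the real work lies only in bookkeeping the two smallness parameters $\varepsilon$ and $\delta$ correctly.
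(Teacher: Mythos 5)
The paper offers no proof of this lemma---it is quoted as Lemma 1.4 of the cited Osaka--Phillips paper---and your argument is correct and is essentially the standard proof from that reference: both directions translate between $[1-e]\leq[a]$ and $\tau(1-e)<\varepsilon$ through a well-chosen projection, using a uniformly small-trace projection for the forward direction and the order-determined-by-traces hypothesis for the converse. Two minor caveats: the construction of a small-trace projection requires $A$ to be infinite-dimensional (a hypothesis present in the Osaka--Phillips statement but dropped here), and exact halving $p=q_1+q_2$ with $q_1\sim q_2$ is not generally available---what the standard argument actually provides, and all you need, is a pair of nonzero mutually orthogonal, mutually equivalent subprojections of $p$, which still yields $\tau(q_1)\leq\tau(p)/2$ for every trace and hence a projection of trace at most $2^{-k}$ after $k$ iterations.
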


\begin{Theorem} Let $A\in \mathcal{A}$ be a unital separable simple
amenable $C$*-algebra, and let $\alpha$ be an automorphism of $A$.
Suppose that $\alpha$ has the tracial Rokhlin property. Suppose also that
there is an integer $J\geq 1$ such that $[\alpha^J]=[\mbox{id}_A]$ in $KL(A,A).$ Then $A\rtimes_{\alpha}\mathbb{Z}\in \mathcal{A}.$
\end{Theorem}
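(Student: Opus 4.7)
The plan is to reduce the statement to H.~Lin's theorem (cited in the introduction) by tensoring with the UHF algebra $M_{\mathtt{p}}$. Set $B = A\rtimes_{\alpha}\mathbb{Z}$. It suffices, by Definition of the class $\mathcal{A}$, to verify the four requirements: $B$ is unital, separable, simple, amenable, satisfies the UCT, and $B\otimes M_{\mathtt{p}}$ has tracial rank zero for some supernatural number $\mathtt{p}$ of infinite type. Unital and separable are immediate; amenability of $B$ follows from nuclearity of $A$ and amenability of $\mathbb{Z}$; simplicity follows because the tracial Rokhlin property forces $\alpha^n$ to be outer for all $n\neq 0$ (Osaka--Phillips), so Kishimoto's criterion applies; and the UCT for $B$ follows from the Pimsner--Voiculescu six-term exact sequence together with the fact that $A$ satisfies the UCT.

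The core of the argument is showing $B\otimes M_{\mathtt{p}}$ has tracial rank zero for the same $\mathtt{p}$ witnessing $A\in \mathcal{A}$. Let $C = A\otimes M_{\mathtt{p}}$ and $\beta = \alpha\otimes \mathrm{id}_{M_{\mathtt{p}}}\in\mathrm{Aut}(C)$. By hypothesis $\mathrm{TR}(C)=0$, and since $C$ is simple, unital, stably finite with $\mathrm{RR}(C)=0$ and order on projections determined by traces, Lemma 3.2 is available for checking the tracial Rokhlin property on $C$. I would show $\beta$ has the tracial Rokhlin property as follows: given $\mathcal{F}\subset C$, $\varepsilon>0$ and $n$, approximate each element of $\mathcal{F}$ within $\varepsilon/2$ by an element of $A\otimes M_k\subset C$ for some large $k$; expand these in matrix units to extract a finite subset $\mathcal{F}'\subset A$; apply the tracial Rokhlin property of $\alpha$ on $A$ to $\mathcal{F}'$ with tolerance $\varepsilon'=\varepsilon/(2k^2)$ and integer $n$, obtaining projections $e_0,\dots,e_n\in A$; then the projections $\tilde e_j = e_j\otimes 1_{M_{\mathtt{p}}}\in C$ satisfy (1) and (2) of Lemma 3.2 (approximate equivariance is preserved, approximate commutation with $\mathcal{F}'$ tensored with matrix units follows from the choice of $\varepsilon'$), and (3) holds because every $\tau\in T(C)$ restricts to a trace $\tau|_A$ on $A$ with $\tau(\tilde e_j)=\tau|_A(e_j)$, using that $M_{\mathtt{p}}$ has a unique trace.

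Next, the KL-triviality passes to the tensor product: the assignment $[\varphi]\mapsto [\varphi\otimes\mathrm{id}_{M_{\mathtt{p}}}]$ is a well-defined homomorphism $KL(A,A)\to KL(C,C)$, so $[\beta^J]=[(\alpha\otimes\mathrm{id})^J]=[\alpha^J\otimes\mathrm{id}] = [\mathrm{id}_A\otimes\mathrm{id}]=[\mathrm{id}_C]$ in $KL(C,C)$. Now apply H.~Lin's theorem (cited in the introduction) to $(C,\beta)$: since $C$ is unital separable simple with $\mathrm{TR}(C)=0$, $\beta$ has the tracial Rokhlin property, and $[\beta^J]=[\mathrm{id}_C]$, we conclude that $C\rtimes_{\beta}\mathbb{Z}$ has tracial rank zero. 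Finally, the natural isomorphism
\[
C\rtimes_{\beta}\mathbb{Z} \;=\; (A\otimes M_{\mathtt{p}})\rtimes_{\alpha\otimes\mathrm{id}}\mathbb{Z} \;\cong\; (A\rtimes_{\alpha}\mathbb{Z})\otimes M_{\mathtt{p}} \;=\; B\otimes M_{\mathtt{p}}
\]
(valid because $M_{\mathtt{p}}$ is nuclear and the action is a tensor-product action) finishes the proof.

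The main obstacle I anticipate is the tensor-stability of the tracial Rokhlin property in the step above: one must be careful that the finite approximations in $A\otimes M_{\mathtt{p}}$ can be controlled only through a finite set in $A$, and that condition (3) (the trace estimate) transports correctly under the restriction $T(C)\to T(A)$. The KL step is routine once one notes that $KL$ is bifunctorial with respect to tensoring by a fixed nuclear UCT algebra, and the identification of the crossed product with $B\otimes M_{\mathtt{p}}$ is a standard verification on generators.
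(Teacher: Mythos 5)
Your proposal is correct and follows essentially the same route as the paper: transfer the tracial Rokhlin property to $\alpha\otimes\mathrm{id}$ on $A\otimes M_{\mathtt{p}}$ via Lemma 3.2, pass the $KL$-condition to the tensor product, invoke Lin's theorem to get $\mathrm{TR}((A\otimes M_{\mathtt{p}})\rtimes_{\alpha\otimes\mathrm{id}}\mathbb{Z})=0$, and identify this crossed product with $(A\rtimes_{\alpha}\mathbb{Z})\otimes M_{\mathtt{p}}$. You are in fact somewhat more careful than the paper, both in the finite-set approximation for the tracial Rokhlin property and in explicitly verifying the remaining membership conditions (simplicity, amenability, UCT) for $A\rtimes_{\alpha}\mathbb{Z}$, which the paper leaves implicit.
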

\begin{proof} Since $A \in \mathcal{A},$ we have $TR(A\otimes
M_{\texttt{P}})=0$ for some supernatural number $\texttt{p}$ of
infinite type. Hence $A\otimes
M_{\texttt{P}}$ have real rank zero, stable rank one and the order on projections over $A\otimes
M_{\texttt{P}}$ is determined by traces(see \cite{H.Lin1}).

For every $\varepsilon>0,$ every $n\in \mathbb{N},$  every finite set
$F\subset A\otimes M_{\texttt{p}}$, Without loss of generality, we
may assume that there exist a finite set $F_A\subset A$ and
$F_{M_{\texttt{p}}}\subset M_{\texttt{p}}$ such that $F=F_A \otimes
F_{M_{\texttt{p}}}.$ We can also find a positive element $a$ of $A$ such that $\tau(a)<\varepsilon$ for all $\tau\in T(A).$

Since $\alpha$ has the tracial Rokhlin property, there exist mutually
orthogonal projections
$$e_0,e_1,\dots,e_{n}\in A$$ such that:

(1) $\|\alpha(e_j)-e_{j+1}\|<\varepsilon$ for $0\leq j\leq n-1.$

(2) $\|e_j b-b e_j\|<\varepsilon$ for $0\leq j\leq n$ and all
$b\in F_A.$

(3) With $e=\sum_{j=0}^{n}e_{j},[1-e]\leq[a].$

 Set $e_j'=e_j\otimes 1_{M_{\texttt{p}}}$ for $ 0\leq j\leq n.$
  Then
$e_j'$, $0\leq j\leq n$,  are mutually
orthogonal projections in $A\otimes M_{\texttt{p}}$.

Then

(1) $\|(\alpha\otimes \mbox{id})(e'_j)-e'_{j+1}\|<\varepsilon$ for $0\leq
j\leq n-1$.

(2) $\|e'_j b-b e'_j\|<\varepsilon$ for $0\leq j\leq n $ and all
$b\in F.$

(3) With $e'=\sum_{j=0}^{n}e'_{j}, \tau(1-e')\leq \tau(a\otimes1_{M_{\texttt{p}}})<\varepsilon$ for all $\tau\in T(A\otimes M_{\texttt{p}}).$

So $\alpha\otimes$id$\in$Aut$(A\otimes M_{\texttt{p}})$ has the tracial
Rokhlin property by Lemma 3.2.

From assumption, there is an integer $J\geq1$ such that
$[\alpha^J]=[\mbox{id}_A]$ in $KL(A,A)$, then $[(\alpha\otimes \mbox{id})^J]=[\mbox{id}_A\otimes \mbox{id}_{M_{\texttt{p}}}]$ in $KL(A\otimes M_{\texttt{p}},A\otimes M_{\texttt{p}}).$

Apply Theorem  4.4 of \cite{H.Lin3}, we get $TR((A\otimes
M_{\texttt{P}})\rtimes_{\alpha\otimes id}\mathbb{Z})=0.$

Next we prove that $(A\otimes M_{\texttt{P}})\rtimes_{\alpha\otimes
id}\mathbb{Z}\cong (A\rtimes_{\alpha}\mathbb{Z})\otimes
M_{\texttt{P}}.$

Let $u$ be the unitary implementing the action of $\alpha$ in the
crossed product $C$*-algebra $A\rtimes_{\alpha}\mathbb{Z}.$

Set of elements which have form
$\sum_{i=-m}^{m}(\sum_{j=0}^{s_i}a_{ij}\otimes m_{ij})(u\otimes
1_{M_{\texttt{P}}})^i$ is dense in $(A\otimes
M_{\texttt{P}})\rtimes_{\alpha\otimes id}\mathbb{Z}.$  We define a
map on dense elements of $(A\otimes M_{\texttt{P}})\rtimes_{\alpha\otimes
id}\mathbb{Z}$  by
$$\varphi: \sum_{i=-m}^{m}(\sum_{j=0}^{s_i}a_{ij}\otimes
m_{ij})(u\otimes 1_{M_{\texttt{P}}})^i\rightarrow
\sum_{i=-m}^{m}(\sum_{j=0}^{s_i}a_{ij}u^i)\otimes m_{ij}.$$

Note
\begin{eqnarray*}&&\sum_{i=-m}^{m}(\sum_{j=0}^{s_i}a_{ij}\otimes
m_{ij})(u\otimes
1_{M_{\mathcal{P}}})^i\\
&=&\sum_{i=-m}^{m}(\sum_{j=0}^{s_i}a_{ij}\otimes
m_{ij})(u^i\otimes
1_{M_{\mathcal{P}}}^i)\\
&=&\sum_{i=-m}^{m}(\sum_{j=0}^{s_i}a_{ij}\otimes m_{ij})(u^i\otimes
1_{M_{\mathcal{P}}})\\&=&\sum_{i=-m}^{m}(\sum_{j=0}^{s_i}a_{ij}u^i)\otimes
m_{ij}. \end{eqnarray*}
then $\varphi$ is a $*$-homomorphism, we can extend $\varphi$ to $(A\otimes
M_{\texttt{P}})\rtimes_{\alpha\otimes id}\mathbb{Z}.$

 So $\varphi$ is a norm invariant
$*$-homomorphism, $\varphi$ is injective, Set of all elements like
the form $\sum_{i=-m}^{m}(\sum_{j=0}^{s_i}a_{ij}u^i)\otimes m_{ij}$
is dense in $(A\rtimes_{\alpha}\mathbb{Z})\otimes M_{\texttt{P}}$.
 We can extend $\varphi$ as an isomorphism  $(A\otimes
M_{\texttt{P}})\rtimes_{\alpha\otimes id}\mathbb{Z}\rightarrow
(A\rtimes_{\alpha}\mathbb{Z})\otimes M_{\texttt{P}}$  .

then $\varphi$ is a $*$-isomorphism.

So $TR((A\rtimes_{\alpha}\mathbb{Z})\otimes M_{\texttt{P}})=0,$
thus $A\rtimes_{\alpha}\mathbb{Z}\in \mathcal{A}.$
\end{proof}

 Next we recall the definition of the  Rokhlin property.

\begin{De}(\cite{A.Kishimoto3})  Let $A$ be a unital simple $C$*-algebra and
$\alpha\in Aut(A).$ We say $\alpha$ has the Rokhlin property if for
every $\varepsilon>0,$ every $n\in \mathbb{N},$ every finite set
$F\subset A,$ there exist the mutually orthogonal projections
$$e_0,e_1,\dots,e_{n-1},f_0,f_1,\dots,f_n\in A$$ such that:

(1) $\|\alpha(e_j)-e_{j+1}\|<\varepsilon$ for $0\leq j\leq n-2$ and
 $\|\alpha(f_j)-f_{j+1}\|<\varepsilon$ for $0\leq j\leq n-1.$

(2) $\|e_j a-a e_j\|<\varepsilon$ for $0\leq j\leq n-1$ and all
$a\in F,$   $\|f_j a-af_j\|<\varepsilon$ for $0\leq j\leq n$ for all
$a\in F$.

(3) $\sum_{j=0}^{n-1}e_j + \sum_{j=0}^n f_j=1.$
\end{De}

If A is a simple unital $C$*-algebra with real rank zero, stable rank one, and has weakly unperforated $K_0(A),$ the Rokhlin property implies
the tracial Rokhlin property(see \cite{Osaka1}).

\begin{Cor} Let $A\in \mathcal{A}$ be a unital separable simple
amenable $C$*-algebra, and let $\alpha$ be an automorphism of $A$.
Suppose that $\alpha$ has the  Rokhlin property. Suppose also that
there is an integer $J\geq 1$ such that $[\alpha^J]=[\mbox{id}_A]$ in $KL(A,A).$ Then $A\rtimes_{\alpha}\mathbb{Z}\in \mathcal{A}.$
\end{Cor}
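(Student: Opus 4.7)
\noindent\textit{Proof proposal.} The plan is to reduce the statement to Theorem 3.3 by exploiting the UHF tensor factor. The remark following Definition 3.4 states that the Rokhlin property implies the tracial Rokhlin property provided the ambient algebra has real rank zero, stable rank one and weakly unperforated $K_0$; these may fail for $A$ itself, but they all hold for $A\otimes M_{\texttt{p}}$, since $TR(A\otimes M_{\texttt{p}})=0$ (see \cite{H.Lin1}). So the strategy is to tensor everything with $M_{\texttt{p}}$, invoke the Rokhlin-to-tracial-Rokhlin implication there, and then recycle the last portion of the proof of Theorem 3.3.

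Concretely, I would first lift the Rokhlin property from $\alpha$ to $\alpha\otimes\mbox{id}_{M_{\texttt{p}}}$. Given $\varepsilon>0$, $n\in\mathbb{N}$ and a finite set $F\subset A\otimes M_{\texttt{p}}$, one may assume $F=F_A\otimes F_{M_{\texttt{p}}}$ as in the proof of Theorem 3.3; applying Definition 3.4 to $\alpha$ with input $(\varepsilon,n,F_A)$ produces projections $e_0,\dots,e_{n-1},f_0,\dots,f_n\in A$, and the tensors $e_j\otimes 1_{M_{\texttt{p}}}$ and $f_j\otimes 1_{M_{\texttt{p}}}$ are then mutually orthogonal projections of $A\otimes M_{\texttt{p}}$ summing to $1$ that witness the Rokhlin property for $\alpha\otimes\mbox{id}$ with respect to $(\varepsilon,n,F)$. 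By the implication noted above, $\alpha\otimes\mbox{id}$ therefore has the tracial Rokhlin property on $A\otimes M_{\texttt{p}}$.

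Next, the hypothesis $[\alpha^J]=[\mbox{id}_A]$ in $KL(A,A)$ tensors up verbatim, giving $[(\alpha\otimes\mbox{id})^J]=[\mbox{id}]$ in $KL(A\otimes M_{\texttt{p}},A\otimes M_{\texttt{p}})$, so Theorem 4.4 of \cite{H.Lin3} yields $TR((A\otimes M_{\texttt{p}})\rtimes_{\alpha\otimes\mbox{id}}\mathbb{Z})=0$. Reusing the $*$-isomorphism $\varphi:(A\otimes M_{\texttt{p}})\rtimes_{\alpha\otimes\mbox{id}}\mathbb{Z}\to (A\rtimes_\alpha\mathbb{Z})\otimes M_{\texttt{p}}$ constructed inside the proof of Theorem 3.3 then gives $TR((A\rtimes_\alpha\mathbb{Z})\otimes M_{\texttt{p}})=0$, whence $A\rtimes_\alpha\mathbb{Z}\in\mathcal{A}$. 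Since the hard analytic content is already packaged in Theorem 4.4 of \cite{H.Lin3} and in the proof of Theorem 3.3, I anticipate no real obstacle; the only point requiring care is the bookkeeping in the first step, which the elementary tensor reduction dispatches routinely.
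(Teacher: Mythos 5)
Your proposal is correct and follows essentially the same route as the paper: tensor the Rokhlin towers with $1_{M_{\texttt{p}}}$ to get the Rokhlin property for $\alpha\otimes\mathrm{id}$ on $A\otimes M_{\texttt{p}}$, pass to the tracial Rokhlin property there using $TR(A\otimes M_{\texttt{p}})=0$ (the paper cites Theorem 1.12 of \cite{Osaka} for this step, while you invoke the equivalent implication recorded after Definition 3.4), and then finish exactly as in Theorem 3.3. No gaps.
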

\begin{proof} Using the similar proof of Theorem 3.3, we can know that $\alpha\otimes$id has the Rokhlin property.
 Since $TR(A\otimes M_{\texttt{p}})=0,$ by Theorem 1.12 of
\cite{Osaka}, $\alpha\otimes \mbox{id}$ has the tracial Rokhlin property. So we can complete the proof as the same as Theorem 3.3.
\end{proof}

The following definition is a stronger version of the Rokhlin property.

\begin{De} Let $A$ be a unital simple $C$*-algebra and
$\alpha\in Aut(A).$ We say $\alpha$ has the cyclic Rokhlin property
if for every $\varepsilon>0,$ every $n\in \mathbb{N},$ every finite
set $F\subset A,$ there exist the mutually orthogonal projections
$$e_0,e_1,\dots,e_{n-1},f_0,f_1,\dots,f_n\in A$$ such that:

(1) $\|\alpha(e_j)-e_{j+1}\|<\varepsilon$ for $0\leq j\leq n-1$ and
$e_n=e_0.$ $\|\alpha(f_j)-f_{j+1}\|<\varepsilon$ for $0\leq j\leq n$
and $f_{n+1}=f_0$.

(2) $\|e_j a-a e_j\|<\varepsilon$ for $0\leq j\leq n-1$ and for
$a\in F.$ $\|f_j a-af_j\|<\varepsilon$ for $0\leq j\leq n$ and all
$a\in F.$

(3) $\sum_{j=0}^{n-1}e_j + \sum_{j=0}^n f_j=1.$
\end{De}

The only difference between the  Rokhlin property
and the  cyclic Rokhlin property is that in condition (1) we require
that $\|\alpha(e_{n-1})-e_0\|<\varepsilon$ and $\|\alpha(f_{n})-f_0\|<\varepsilon.$

\begin{Cor} Let $A\in \mathcal{A}_{\mathcal{Z}}$ be a unital separable simple
amenable $C$*-algebra, and let $\alpha$ be an automorphism of $A$.
Suppose that $\alpha$ has the cyclic Rokhlin property. Suppose also that
there is an integer $J\geq 1$ such that $[\alpha^J]=[\mbox{id}_A]$ in $KL(A,A).$ Then $A\rtimes_{\alpha}\mathbb{Z}\in
\mathcal{A}_{\mathcal{Z}}.$
\end{Cor}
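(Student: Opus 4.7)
The plan has two parts: first to recover $A\rtimes_\alpha\mathbb{Z}\in\mathcal{A}$ from the arguments already given, and then to upgrade this to $\mathcal{Z}$-stability using the cyclic structure of the Rokhlin towers.

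For the first part, I observe that the cyclic Rokhlin property of Definition 3.6 trivially implies the Rokhlin property of Definition 3.4: simply discard the closure conditions $e_n=e_0$ and $f_{n+1}=f_0$ from condition (1), and every remaining requirement of Definition 3.4 is automatically satisfied. Hence Corollary 3.5 applies directly and yields $A\rtimes_\alpha\mathbb{Z}\in\mathcal{A}$; equivalently, $TR\bigl((A\rtimes_\alpha\mathbb{Z})\otimes M_\texttt{p}\bigr)=0$.

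For the second part, I would invoke the Toms--Winter characterization of $\mathcal{Z}$-stability: it suffices to produce, for arbitrarily large $n$, approximately central unital $*$-homomorphisms from the prime dimension-drop algebra $\mathcal{Z}_{n,n+1}$ into $A\rtimes_\alpha\mathbb{Z}$. Let $u$ denote the unitary in $A\rtimes_\alpha\mathbb{Z}$ implementing $\alpha$. Given a finite $F\subset A$ and $\varepsilon>0$, choose cyclic Rokhlin towers $\{e_j\}_{j=0}^{n-1}$ and $\{f_j\}_{j=0}^{n}$ approximately commuting with $F$. The cyclic closures $\alpha(e_{n-1})\approx e_0$ and $\alpha(f_n)\approx f_0$, together with $u e_j u^*=\alpha(e_j)$, make $v_{ij}:=e_i u^{i-j}$ an approximate system of $n\times n$ matrix units in $A\rtimes_\alpha\mathbb{Z}$ approximately commuting with $F$; similarly $w_{ij}:=f_i u^{i-j}$ yields an approximate copy of $M_{n+1}$. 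The two matrix blocks are orthogonal and have full support $\sum_j e_j+\sum_j f_j=1$.

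The main obstacle is combining these two approximate matrix blocks into an approximately central homomorphism out of $\mathcal{Z}_{n,n+1}$: this requires a continuous path of unitaries, in the central sequence algebra of $A\rtimes_\alpha\mathbb{Z}$, realizing the dimension-drop homotopy between the fibres $M_n\otimes 1$ and $1\otimes M_{n+1}$ inside $M_n\otimes M_{n+1}$. This is precisely where the hypothesis $A\in\mathcal{A}_\mathcal{Z}$ is used: since $A\cong A\otimes\mathcal{Z}$, the relative commutant of $A$ in its ultrapower contains a unital copy of $\mathcal{Z}$, from which the required path of unitaries is extracted without disturbing the matrix units already constructed. Producing such approximately central maps $\mathcal{Z}_{n,n+1}\to A\rtimes_\alpha\mathbb{Z}$ for every $n$ (and $\gcd(n,n+1)=1$) then implies $\mathcal{Z}$-stability of $A\rtimes_\alpha\mathbb{Z}$, completing the proof that it lies in $\mathcal{A}_\mathcal{Z}$. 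Without the \emph{cyclic} closure one would obtain only approximate partial isometries instead of matrix units, and the homotopy path could not be formed; this is why Corollary 3.5 alone does not suffice.
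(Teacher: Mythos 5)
Your first step coincides with the paper's: the cyclic Rokhlin property of Definition 3.6 trivially implies the Rokhlin property of Definition 3.4, so Corollary 3.5 already yields $A\rtimes_\alpha\mathbb{Z}\in\mathcal{A}$. For the $\mathcal{Z}$-stability of the crossed product, however, the paper does not argue directly at all: it invokes Theorem 4.4 of Hirshberg--Winter \cite{w.winter3}, which states that if $A$ is $\mathcal{Z}$-stable and $\alpha$ has the cyclic Rokhlin property then $A\rtimes_\alpha\mathbb{Z}$ is again $\mathcal{Z}$-stable. You instead attempt to reprove that theorem from scratch via the Toms--Winter criterion, and your sketch has genuine gaps.

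Concretely: (i) the elements $v_{ij}=e_iu^{i-j}$ do \emph{not} approximately commute with $F\subset A$ when $i\neq j$, since $u^{i-j}au^{j-i}=\alpha^{i-j}(a)$ is in general far from $a$; moreover, for the Toms--Winter characterization the unital copy of $\mathcal{Z}_{n,n+1}$ must be approximately central in $A\rtimes_\alpha\mathbb{Z}$, hence must approximately commute with $u$ itself, and $\mathrm{ad}\,u$ cyclically permutes your matrix units rather than approximately fixing them. (ii) The step in which ``the required path of unitaries is extracted without disturbing the matrix units already constructed'' is precisely the technical heart of the Hirshberg--Winter theorem and cannot be waved through; their actual construction runs in the opposite direction: one starts from an approximately central unital copy of $\mathcal{Z}$ inside $A$ (available because $A\cong A\otimes\mathcal{Z}$) and uses the cyclic Rokhlin projections to twist it into a copy that also approximately commutes with $u$, rather than manufacturing matrix units out of the towers and then trying to connect the two blocks. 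As written, your second part is a plausible outline but not a proof; the repair is either to carry out the Hirshberg--Winter construction in detail or, as the paper does, simply to cite their Theorem 4.4 and combine it with Corollary 3.5.
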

\begin{proof} By Theorem 4.4 of \cite{w.winter3}, if $A$ is $\mathcal{Z}$-stable, $\alpha$ has the
cyclic Rokhlin property, then $A\rtimes_\alpha \mathbb{Z}$ is also
$\mathcal{Z}$-stable, we can get the result from Corollary 3.5.
\end{proof}

\begin{Remark} If $A$ is a unital separable simple $C$*-algebra with locally finite decomposition rank which satisfies the UCT.
Suppose the projections of $A$ separate traces, since $A\otimes M_{\texttt{P}}$
is approximate divisible, apply Theorem1.4(e) of \cite{Blackadar1}, $A\otimes
M_{\texttt{P}}$ has real rank zero. But $A\otimes M_{\texttt{P}}$
has locally finite decomposition rank and is $\mathcal{Z}$-stable, by Theorem 2.1 of \cite{w.winter2},
we have $TR(A\otimes M_{\texttt{P}})=0.$
This $C$*-algebras class is larger than  class of $C$*-algebras which have tracial rank zero.
\end{Remark}

  {\scshape Department of Mathematics, Tongji University,
Shanghai 200092, P.R.CHINA.}

{\it E-mail address}: huajiajie2006@hotmail.com
\end{document}